\theoremstyle{plain}
\newtheorem{theorem}{Theorem}[section]
\newtheorem{lem}[theorem]{Lemma}
\newtheorem{claim}[theorem]{Claim}
\newtheorem{corollary}[theorem]{Corollary}
\newtheorem{defn}[theorem]{Definition}
\theoremstyle{remark}
\newcommand{\cal}[0]{\mathcal}
\renewcommand{\dots}[0]{\ldots}
\newcommand{\beq}[1]{\begin{equation}\label{#1}}
\newcommand{\enq}[0]{\end{equation}}
\title{Short injective proofs of the Erd\H{o}s-Ko-Rado and Hilton-Milner
Theorem:\\
A canonical partition of shifted intersecting set systems}
\author[Nguyen Trong Tuan]{Nguyen Trong Tuan${}^{1,2}$}
\address{${}^1$Faculty of Mathematics and Computer Science, University of Science, Ho Chi Minh City, Vietnam.} 
\address{${}^2$Vietnam National University, Ho Chi Minh City, Vietnam.}
\email{nttuan@ptnk.edu.vn (Nguyen Trong Tuan)}
\author[Nguyen Anh Thi]{Nguyen Anh Thi$^{1,2}$}
\email{nathi@hcmus.edu.vn (Nguyen Anh Thi)}
\begin{document}

\begin{abstract}
We give a canonical partition of shifted intersecting set systems, from which one can obtain unified and elementary proofs of the Erd\H{o}s-Ko-Rado and Hilton-Milner Theorem, as well as a characterization of maximal shifted $k$-uniform intersecting set systems over $[n]$. 
\end{abstract}
\maketitle

\section{Introduction}

A $k$-uniform intersecting set system on $[n]=\{1,2,\dots, n\}$ is
a collection of $k$-element subsets of $[n]$ such that any two subsets
in the collection have nonempty intersection. One of the classical
and fundamental result in extremal set theory is the Erd\H{o}s-Ko-Rado
Theorem \cite{EKR} on the maximum size of a $k$-uniform intersecting
set system. In particular, it shows that the maximum size of a $k$-uniform
intersecting set system is attained by the collection of all $k$-element
subsets of $[n]$ containing a fixed element. 
\begin{theorem}
\label{thm:EKR}Let ${\cal F}$ be a $k$-uniform intersecting set
system on $[n]$. If $n\ge2k$, then $|{\cal F}|\le\binom{n-1}{k-1}$.
Furthermore, if $n>2k$, equality occurs if and only there exists
$x$ such that ${\cal F}$ consists of all subsets of size-$k$ of
$[n]$ containing $x$. 
\end{theorem}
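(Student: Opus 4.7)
The plan is to use Frankl's shifting method to reduce to the case where ${\cal F}$ is shifted, and then to construct the canonical partition promised by the abstract. Recall the shift operation $S_{ij}$ for $1\le i<j\le n$: each $A\in{\cal F}$ with $j\in A$, $i\notin A$, and $(A\setminus\{j\})\cup\{i\}\notin{\cal F}$ is replaced by $(A\setminus\{j\})\cup\{i\}$. This operation preserves $|{\cal F}|$, $k$-uniformity, and the intersecting property; iterating over all pairs $(i,j)$ produces a shifted intersecting family ${\cal F}^{*}$ with $|{\cal F}^{*}|=|{\cal F}|$. It therefore suffices to prove the bound for shifted ${\cal F}$, then to trace the uniqueness back through the shifts.

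With ${\cal F}$ shifted and intersecting, I would split ${\cal F}={\cal F}_{1}\cup{\cal F}_{0}$, where ${\cal F}_{1}=\{A\in{\cal F}:1\in A\}$ and ${\cal F}_{0}={\cal F}\setminus{\cal F}_{1}$, and build an explicit injection
\[
\Phi:{\cal F}_{0}\longrightarrow\{B\in\binom{[n]}{k}:1\in B\}\setminus{\cal F}_{1}.
\]
Such a $\Phi$ would immediately give $|{\cal F}|=|{\cal F}_{0}|+|{\cal F}_{1}|\le\binom{n-1}{k-1}$. The canonical partition alluded to in the abstract should supply $\Phi$: for each $A\in{\cal F}_{0}$, the partition picks a canonical partner $\Phi(A)$ containing $1$, with the property that $\Phi(A)\in{\cal F}$ would contradict the combined shifted and intersecting hypotheses on ${\cal F}$. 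The $n=2k$ case is the prototype: the involution $A\leftrightarrow[2k]\setminus A$ partitions $\binom{[2k]}{k}$ into $\binom{2k-1}{k-1}$ disjoint pairs, each containing at most one element of ${\cal F}$ by intersectiveness. The canonical partition must generalise this pairing to arbitrary $n\ge 2k$ while respecting the shifted structure.

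For the uniqueness claim when $n>2k$, equality $|{\cal F}|=\binom{n-1}{k-1}$ forces $\Phi$ to be a bijection onto its target, and tracing through the canonical partition should pin down ${\cal F}^{*}=\{A\in\binom{[n]}{k}:1\in A\}$, the star at $1$. Then one observes that the shifts $S_{ij}$ carry a star $\{A:x\in A\}$ either to itself or to the star centered at $i$, and for $n>2k$ cannot create a star from a non-star; inverting the sequence of shifts then identifies the original ${\cal F}$ as a star $\{A:x\in A\}$ for some $x\in[n]$. The boundary $n=2k$ is explicitly excluded from the uniqueness precisely because the canonical partition offers no such rigidity — many distinct maximal intersecting families of size $\binom{2k-1}{k-1}$ exist at $n=2k$.

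The main obstacle is the explicit definition of $\Phi$ — equivalently, of the canonical partition itself — so that it is uniform in $n\ge 2k$ and transparent enough to yield both the bound and the uniqueness statement in a unified manner. This is presumably where the paper's novelty lies; once the partition is in hand, the EKR bound and its equality case should fall out in a few lines, and the Hilton--Milner bound should follow from the same partition by restricting attention to the non-star portion of ${\cal F}$.
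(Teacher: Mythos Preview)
Your proposal is a plan, not a proof: you explicitly say ``the main obstacle is the explicit definition of $\Phi$'' and leave that construction to the paper. Without $\Phi$, nothing has been established beyond the standard reduction to shifted families and the $n=2k$ complementation argument, both of which are well known.

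More importantly, your guess at the shape of the canonical partition is wrong. The paper does \emph{not} split $\mathcal{F}$ by membership of $1$, and it does \emph{not} build an injection $\Phi:\mathcal{F}_0\to\{B:1\in B\}\setminus\mathcal{F}_1$. Instead, for shifted intersecting $\mathcal{F}$ it assigns to each $S\in\mathcal{F}$ a \emph{type} $i\in\{0,\dots,k-1\}$, namely the least $i$ with $|S\cap[2k-i-1]|\ge k-i$ (existence of such $i$ is the first lemma, proved by observing that otherwise the set $T$ of the first $k$ integers outside $S$ satisfies $T\preceq S$, so $T\in\mathcal{F}$, contradicting intersectiveness). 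This gives the partition $\mathcal{F}=\bigcup_{i=0}^{k-1}\mathcal{F}_i$. A second lemma shows that the projections $\pi_i(S)=S\cap[2k-i-1]$ for $S\in\mathcal{F}_i$ form a $(k-i)$-uniform intersecting family on $[2k-i-1]$, so by \emph{induction on $k$} one has $|\pi_i(\mathcal{F}_i)|\le\binom{2k-i-2}{k-i-1}$; since each $S\in\mathcal{F}_i$ is recovered from $\pi_i(S)$ together with an $i$-subset of $[2k-i+1,n]$, this gives $|\mathcal{F}_i|\le\binom{2k-i-2}{k-i-1}\binom{n-2k+i}{i}$. A separate observation---that the sets $\pi_i(S)\cup\{2k-i+1,\dots,2k\}$ over all $i$ and $S$ form an intersecting subfamily of $\binom{[2k]}{k}$---yields $\sum_i|\pi_i(\mathcal{F}_i)|\le\binom{2k-1}{k-1}$. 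Combining these two inequalities with the identity $\sum_{i=0}^{k-1}\binom{2k-i-2}{k-i-1}\binom{n-2k+i}{i}=\binom{n-1}{k-1}$ gives the EKR bound; for $n>2k$, equality forces each $\pi_i(\mathcal{F}_i)$ with $i\ge 1$ to be a star at $1$ by the inductive equality case, which then forces every set of $\mathcal{F}$ to contain $1$. So the argument is inductive on $k$, and the partition is by intersection profile with the nested initial segments $[2k-i-1]$, not by whether $1$ belongs to the set.
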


A strengthening of the Erd\H{o}s-Ko-Rado Theorem was shown by Hilton
and Milner \cite{HM}, which gives a tight upper bound on the size
of an intersecting $k$-uniform set system in which no element is
contained in all sets in the system. 

\begin{theorem}
\label{thm:HM}Let ${\cal F}$ be a $k$-uniform intersecting set
system on $[n]$. Assume that $n>2k$, and no element is contained
in all sets of ${\cal F}$. Then $|{\cal F}|\le\binom{n-1}{k-1}-\binom{n-k-1}{k-1}+1$. Furthermore, when $k\ge 4$, equality is attained if and only if $\cal{F}$ is isomorphic to the family $\{A\in \binom{[n]}{k}:1\in A\} \cup \{\{2,3,\dots,k+1\}\}$; and when $k=3$, equality is attained if and only if $\cal{F}$ is isomorphic to either the family $\{A\in \binom{[n]}{3}:1\in A\} \cup \{\{2,3,4\}\}$, or the family $\{A\in \binom{[n]}{3}:|A\cap \{1,2,3\}|=2$.
\end{theorem}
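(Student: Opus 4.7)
\nin\textbf{Proof plan.} The plan is to reduce to a shifted intersecting family by standard compression and then invoke the canonical partition of shifted intersecting systems that is the technical core of the paper. I would apply the shifting operators $S_{ij}$ with $1 \le i < j \le n$ repeatedly: these preserve the size, the uniformity, and the intersecting property, and terminate at a shifted family $\cal F^{*}$ with $|\cal F^{*}| = |\cal F|$. A subtle but standard point is that shifting can create a common element from a family that had none; I would handle this by isolating the first $S_{ij}$ that introduces a common element and arguing directly on the pre-shift family, in which every set meets $\{i,j\}$ and the cross-intersecting analysis between sets containing $i$ and sets containing $j$ already forces the Hilton--Milner bound.

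\medskip

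In the main case $\cal F^{*}$ is shifted, intersecting, and has no common element. Split $\cal F^{*} = \cal F_1 \cup \cal F_0$ into star sets (containing $1$) and non-star sets. Because $\cal F_0$ is nonempty and $\cal F^{*}$ is shifted, the lex-minimal non-star set is $B = \{2, 3, \ldots, k+1\}$. Every star set must intersect $B$, hence $|\cal F_1| \le \binom{n-1}{k-1} - \binom{n-k-1}{k-1}$. The heart of the argument is to bound the contribution of $\cal F_0$ by controlling the trade-off between ``missing'' star sets and ``extra'' non-star sets: any $F \in \cal F_0$ with $F \ne B$ must cross-intersect every $G' \in \{A \setminus \{1\} : A \in \cal F_1\}$, and using shiftedness together with $n > 2k$ one can construct a candidate star set that cross-fails against $F$. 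The canonical partition would make this trade-off sharp, yielding the bound $\binom{n-1}{k-1} - \binom{n-k-1}{k-1} + 1$.

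\medskip

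For the equality case the plan is to run the preceding inequalities backward. If $k \ge 4$, the strict inequality $n > 2k$ forces $\cal F^{*} = \{A \in \binom{[n]}{k} : 1 \in A,\ A \cap \{2, \ldots, k+1\} \ne \emptyset\} \cup \{B\}$, and since the compression operators only permute labels, un-shifting recovers the first extremal family of Theorem \ref{thm:HM} up to isomorphism. When $k = 3$, the construction of a contradicting star set breaks down for non-star sets of the form $F = \{2, 3, x\}$ (every star set then still meets $F$), and the trade-off admits a second tight configuration, namely the triangle-like family stated in Theorem \ref{thm:HM}; this exceptional case must be separated from the first by a direct inspection of the canonical partition at $k = 3$.

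\medskip

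The main obstacle I foresee is the compression step, since a naive application of shifting can destroy the no-common-element hypothesis, and the pre-shift analysis needed to bypass this is the crux of reducing Hilton--Milner to the shifted case. A secondary obstacle is the $k = 3$ equality classification, where two non-isomorphic extremal families coexist and must both be recovered from the canonical partition while the argument must also certify that no such exceptional family arises for $k \ge 4$.
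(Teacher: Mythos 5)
Your proposal has a genuine gap at its central step, and it also diverges from the paper's actual mechanism in a way worth noting. After splitting $\mathcal{F}^{*}$ into star sets and non-star sets, you correctly bound the star part by $\binom{n-1}{k-1}-\binom{n-k-1}{k-1}$ (every set must meet $B=\{2,\dots,k+1\}$ since $B\in\mathcal{F}^{*}$), but the entire difficulty of Hilton--Milner is then concentrated in showing that each non-star set other than $B$ can be injectively charged to a distinct star set that meets $B$ yet is absent from $\mathcal{F}^{*}$. You assert that ``using shiftedness together with $n>2k$ one can construct a candidate star set that cross-fails against $F$'' and that ``the canonical partition would make this trade-off sharp,'' but no such injection is constructed, and this is precisely the step that constitutes the proof. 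The family $\{A:|A\cap\{1,2,3\}|=2\}$ at $k=3$ shows that the non-star part can be large, so the charging must genuinely be a verified injection rather than a claim that there is at most one extra set; as written, the bound $\binom{n-1}{k-1}-\binom{n-k-1}{k-1}+1$ is not derived.

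The paper avoids this trade-off entirely and uses a different decomposition: not star versus non-star, but the partition $\mathcal{F}=\bigcup_{i=0}^{k-1}\mathcal{F}_{i}$ by type, where $S$ has type $i$ for the least $i$ with $|S\cap[2k-i-1]|\ge k-i$ (Lemmas \ref{lem:intersecting-interval}--\ref{lem:intersecting-Fi}). The only input beyond the Erd\H{o}s--Ko--Rado computation is the observation that $\{2,\dots,k+1\}\in\mathcal{F}$ forces $\{1,k+2,\dots,2k\}\notin\mathcal{F}$, hence the type-$(k-1)$ class is empty; removing the $i=k-1$ term from the identity $\sum_{i=0}^{k-1}\binom{2k-i-2}{k-i-1}\binom{n-2k+i}{i}=\binom{n-1}{k-1}$ then yields exactly the Hilton--Milner bound, and the equality analysis (including the two extremal families at $k=3$, which arise from the two possible shifted intersecting $2$-uniform families $\pi_{1}(\mathcal{F}_{1})$ on $[4]$ of size $3$) falls out of the same accounting. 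Your plan invokes ``the canonical partition'' only as a slogan at the point where the work happens, so it does not correspond to the paper's argument or to a complete alternative one. The compression obstacle you flag is real but is disposed of by citing Frankl's result, as the paper does; the missing injection is the actual crux.
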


Since the original papers, there have been several alternative proofs
of the Erd\H{o}s-Ko-Rado and Hilton-Milner Theorem, \cite{FF,Fr19,FT,KZ,M}.
In this paper, we provide short elementary injective proofs of the
Erd\H{o}s-Ko-Rado and Hilton-Milner Theorem on intersecting set systems (with equality cases characterized).
Our approach is based on a canonical partition of the set system,
and allows to prove the Hilton-Milner Theorem with a unifying and essentially identical argument as for the Erd\H{o}s-Ko-Rado Theorem. 

The canonical partition of the set system also allows to give a characterization
of maximal shifted $k$-uniform intersecting set systems over $[n]$
via appropriate intersecting set systems over $[2k-1]$ for any $n$. As an immediate corollary, we can show that the number of maximal shifted $k$-uniform intersecting set systems over $[n]$ is bounded by a constant depending only on $k$. 

We expect that the same argument would be useful for studying intersecting
set systems with additional constraints. 

\section{Main results}

\subsection{Compression }

First, we introduce the standard compression (shifting) technique
together with its basic properties. 
\begin{defn}
A set system ${\cal F}$ on the universe $[n]=\{1,2,\dots, n\}$ is
said to be \textit{shifted} if for all $i<j$ and all $S\in{\cal F}$
with $j\in S$ and $i\notin S$, the set $S'=S-\{j\}+\{i\}\in{\cal F}$.
\end{defn}

Any set system can be transformed into a shifted set system by using
the following operations: Define an $(i,j)$-shift of ${\cal F}$
to be the set system ${\cal F}'$ obtained by replacing $S\in{\cal F}$
with $j\in S$ and $i\notin S$ with $S'=S-\{j\}+\{i\}$ if $S'\notin{\cal F}$.
It is easy to verify that upon applying an $(i,j)$-shift, an intersecting
set system ${\cal F}$ remains intersecting. Furthermore, upon finitely
many $(i,j)$-shifts, we obtain a shifted set system. 

For our later application to the Hilton-Milner Theorem, we will need
the following result of Frankl \cite{Fr19}.
\begin{lem}
Let $n\ge2k\ge4$. Suppose that ${\cal F}$ is a $k$-uniform intersecting
set system of $[n]$ with no element contained in all sets of ${\cal F}$.
Then there exists an intersecting set system ${\cal F}'$ with no
element contained in all sets of ${\cal F}'$, $|{\cal F}'|=|{\cal F}|$,
and ${\cal F}'$ is shifted. 
\end{lem}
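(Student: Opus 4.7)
The strategy is to perform $(i,j)$-shifts on $\mathcal{F}$ one at a time while preserving the non-existence of a universal element. The key structural observation is the following: if $\mathcal{F}$ has no universal element, then under any $(i,j)$-shift only the degree of $i$ can strictly increase (all other coordinate degrees are unchanged or decreased), so the only element that could become universal after the shift is $i$ itself. Moreover, $i$ actually becomes universal exactly when (a) every $S\in\mathcal{F}$ contains $i$ or $j$, and (b) for every $S\in\mathcal{F}$ with $j\in S$ and $i\notin S$, the image $S-j+i\notin\mathcal{F}$.

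Armed with this dichotomy, I would run the following loop. As long as $\mathcal{F}$ is not shifted, fix a witness $(i,j)$ of non-shiftedness. If the standard $(i,j)$-shift does not create a universal element, apply it and continue. If it does create one, we are in the constrained configuration (a)--(b); I would then attempt a local modification that strictly reduces the shift potential $\Phi(\mathcal{F})=\sum_{S\in\mathcal{F}}\sum_{x\in S}2^{x}$ without creating a universal element. Two candidates present themselves. The first is a \emph{single-set shift}: pick one applicable set $S_1\in\mathcal{B}:=\{S\in\mathcal{F}:j\in S,i\notin S\}$ and replace only $S_1$ by $S_1-j+i$. Against any $T\notin\mathcal{B}$ this preserves intersecting automatically (since such $T$ contains $i$ by (a)), and against $T\in\mathcal{B}\setminus\{S_1\}$ it preserves intersecting iff $|S_1\cap T|\geq 2$. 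The second is a \emph{sideways shift}: replace $S_1$ by $S_1-j+\ell$ for some $\ell\neq i$ with $\ell<j$, $\ell\notin S_1$, $S_1-j+\ell\notin\mathcal{F}$; here $i$ does not become universal because $S_1-j+\ell$ still avoids $i$.

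Iterating, $\Phi$ strictly decreases at each step but is bounded below, so the procedure terminates at a shifted intersecting family $\mathcal{F}'$ with $|\mathcal{F}'|=|\mathcal{F}|$ and no universal element.

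The main obstacle is verifying that, in the constrained configuration, at least one of the local modifications is always available. The hardest sub-case is a doubly-degenerate configuration where $\mathcal{B}$ is a sunflower with core $\{j\}$ (so no single-set shift preserves intersecting) and simultaneously every sideways direction is blocked. I would rule this out by a case analysis exploiting the bound $n\geq 2k$ together with the intersecting structure of $\mathcal{B}$ against $\mathcal{A}:=\mathcal{F}\setminus\mathcal{B}$: the sunflower hypothesis forces $|\mathcal{B}|\leq k-1$ via an intersecting argument (each $S\in\mathcal{A}$ must meet each of the disjoint petals), and the blocking of every sideways direction forces the petals and the single set $S_1$ to occupy so much of $[n]$ that, combined with the assumption that no element of $\mathcal{F}$ is universal, one derives a contradiction. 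This is where the hypothesis $n\geq 2k\geq 4$ is used critically — the universe is just large enough to guarantee combinatorial room for an alternative move whenever one is needed.
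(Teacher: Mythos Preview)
First, note that the paper does not supply its own proof of this lemma: it is quoted as a result of Frankl~\cite{Fr19}, with no argument given. So there is no in-paper proof to compare against, and I assess your proposal on its own merits.

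Your identification of conditions (a)--(b) for when $S_{ij}$ creates a star at $i$ is correct, and the overall plan is a reasonable line of attack. But the handling of the degenerate case has real gaps. (1)~The \emph{sideways shift} is not shown to preserve the intersecting property: if some $T\in\mathcal{F}$ has $S_{1}\cap T=\{j\}$ and $\ell\notin T$, then $(S_{1}-j+\ell)\cap T=\emptyset$; you verify only that $i$ stays non-universal, not that the new family remains intersecting. (2)~Your claim that ``all single-set shifts fail'' forces $\mathcal{B}$ to be a sunflower with core $\{j\}$ is false: the actual failure condition is merely that every $S_{1}\in\mathcal{B}$ has \emph{some} $T\in\mathcal{B}$ with $S_{1}\cap T=\{j\}$, which is strictly weaker than a sunflower, so the subsequent bound $|\mathcal{B}|\le k-1$ and the ``combinatorial room'' contradiction rest on an incorrect premise. (3)~When $|\mathcal{B}|=1$ the single-set shift \emph{is} the full shift $S_{ij}$ and therefore creates a universal $i$, forcing you into the sideways move whose correctness is unestablished. (4)~The closing case analysis is asserted, not carried out; that analysis is exactly where the substance of the lemma lies, and invoking $n\ge 2k$ without a concrete argument is not a proof. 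In short, the proposal is a plausible outline but not a proof. Frankl's actual argument avoids this local case analysis by a different device: one fixes at the outset a triple $F_{1},F_{2},F_{3}\in\mathcal{F}$ with $F_{1}\cap F_{2}\cap F_{3}=\emptyset$ and tracks it through the shifting process, so that a witness to non-triviality is maintained globally rather than repaired locally after each bad shift.
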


In particular, for the proof of Theorem \ref{thm:EKR} and Theorem
\ref{thm:HM}, we can assume without loss of generality that the set
system ${\cal F}$ is shifted.

\subsection{Partitioning the set system }\label{sec:Partitioning}

In this subsection, we introduce the key idea in our elementary proofs
of Theorems \ref{thm:EKR} and \ref{thm:HM}, based on a canonical
way to partition a shifted intersecting set system. Throughout this
subsection and the remaining part of the paper, we assume that the
intersecting set system ${\cal F}$ is intersecting and shifted. 

Given two sets $A,B$ of integers of size $k$, we write $A\succeq B$
(or $B\preceq A$) if the elements of $A$ are $a_{1}<a_{2}<\dots<a_{k}$
and the elements of $B$ are $b_{1}<b_{2}<\dots<b_{k}$ and $a_{i}\ge b_{i}$
for all $i=1,\dots,k$. 
\begin{claim}
If ${\cal F}$ is shifted and $A\in{\cal F}$ then $B\in{\cal F}$
for any $B\preceq A$. 
\end{claim}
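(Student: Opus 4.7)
The plan is to induct on the quantity $d(A,B) := \sum_{i=1}^{k}(a_i - b_i)$, which is nonnegative since $B \preceq A$. When $d(A,B) = 0$ the two sorted tuples coincide, giving $A = B \in {\cal F}$. For the inductive step, I will exhibit a single shift operation sending $A$ to some $A' \in {\cal F}$ with $B \preceq A'$ and $d(A',B) = d(A,B) - 1$, so that the inductive hypothesis applied to $A'$ yields $B \in {\cal F}$.

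To build the shift, let $j$ be the largest index with $a_j > b_j$ (which exists because $d(A,B) > 0$). The natural first move is to replace $a_j$ by $a_j - 1$: when $a_j - 1 \notin A$, the shifted property of ${\cal F}$ puts $A' = (A \setminus \{a_j\}) \cup \{a_j - 1\}$ in ${\cal F}$; the sorted tuple of $A'$ differs from that of $A$ only in the $j$-th coordinate, and $a_j - 1 \ge b_j$ gives $B \preceq A'$ immediately.

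The main obstacle is the case $a_j - 1 \in A$, where we must modify a different element of $A$. I resolve this by letting $s \ge 0$ be the largest integer with $a_j, a_j - 1, \dots, a_j - s$ all in $A$, so that $a_{j-r} = a_j - r$ for $r = 0, \dots, s$ and $a_j - s - 1 \notin A$; I then shift $a_{j-s}$ down to $a_{j-s} - 1$. The shifted property gives $A' \in {\cal F}$, and the sorted tuple of $A'$ agrees with that of $A$ outside the $(j-s)$-th coordinate, which has dropped by one. What remains to check is $a_{j-s} - 1 \ge b_{j-s}$, equivalently $a_{j-s} > b_{j-s}$; this follows from a short telescoping using $b_j \ge b_{j-r} + r$ (from the strict monotonicity $b_1 < b_2 < \dots < b_k$) combined with $a_j \ge b_j + 1$, which yields $a_{j-r} = a_j - r \ge b_{j-r} + 1$ for all $r \in \{0, 1, \dots, s\}$. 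Hence $B \preceq A'$, $d$ has decreased by exactly one, and the induction closes.
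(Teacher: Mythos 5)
Your proof is correct. It differs from the paper's in an instructive way: the paper inducts on the coordinate index rather than on the total displacement $\sum_i(a_i-b_i)$, showing that $\{b_1,\dots,b_i,a_{i+1},\dots,a_k\}\in\mathcal{F}$ for each $i$ by applying the single shift $(b_i,a_i)$ to the previous intermediate set. Since $b_1<\dots<b_{i-1}<b_i\le a_i<a_{i+1}<\dots<a_k$, the new element $b_i$ is never already present in that set, so each coordinate travels to its final value in one legal shift no matter how far it must move, and the collision problem never arises. Your unit-decrement scheme pays for its smaller steps with precisely that collision case ($a_j-1\in A$), which you resolve correctly by walking down to the bottom of the maximal run of consecutive elements of $A$ ending at $a_j$ and using the telescoping inequality $a_{j-r}=a_j-r\ge b_{j-r}+1$ to confirm that the coordinate at the bottom of the run still has room to drop; two small checks are left implicit but are immediate (that $j-s\ge 1$, since $a_j-s,\dots,a_j$ are $s+1$ elements of $A$ not exceeding $a_j$, and that $a_{j-s}-1\ge 1$, since $a_{j-s}-1\ge b_{j-s}\ge 1$). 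Both proofs use nothing beyond shiftedness; the paper's choice of shifts $(b_i,a_i)$ instead of $(m-1,m)$ is what makes its argument a two-line induction, while yours is longer but equally valid and arguably makes the mechanism of "sliding $A$ down onto $B$" more explicit.
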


\begin{proof}
Assume that $B\preceq A$. Since $b_{1}\le a_{1}$ and ${\cal F}$
is shifted, we have $\{b_{1},a_{2},\dots,a_{k}\}\in{\cal F}$ (by
considering the $(b_{1},a_{1})$-shift of ${\cal F}$ if $b_{1}\ne a_{1}$).
Inductively we can guarantee that $\{b_{1},\dots,b_{i},a_{i+1},\dots,a_{k}\}\in{\cal F}$
for any $i\le k$, so $B\in{\cal F}$. 
\end{proof}
\begin{lem}
\label{lem:intersecting-interval}If ${\cal F}$ is intersecting and
shifted and $S\in{\cal F}$ has size $k$, then there exists $i\in[0,k-1]$
with $|S\cap[2k-i-1]|\ge k-i$. 
\end{lem}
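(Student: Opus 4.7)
The plan is to proceed by contradiction. Write $S = \{s_1 < s_2 < \dots < s_k\}$. The conclusion we want is that for some $i \in [0, k-1]$, the set $S$ has at least $k-i$ elements in $[2k-i-1]$, which is equivalent to $s_{k-i} \le 2k-i-1$. Reindexing by $j = k-i$, this says: there exists $j \in [1,k]$ with $s_j \le k+j-1$, i.e., $s_j < k+j$.

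So suppose the conclusion fails. Then $s_j \ge k+j$ for every $j \in [1,k]$. I would then exhibit two disjoint sets that are both $\preceq S$. The natural candidates are
\[
T = \{1, 2, \dots, k\} \quad \text{and} \quad T' = \{k+1, k+2, \dots, 2k\}.
\]
For $T$, the $j$-th element is $j$, and clearly $j \le k+j \le s_j$, so $T \preceq S$. For $T'$, the $j$-th element is $k+j$, and by the contradiction hypothesis $k+j \le s_j$, so $T' \preceq S$ as well.

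By the preceding Claim (shifted set systems are downward closed under $\preceq$), both $T$ and $T'$ belong to $\mathcal{F}$. But $T \cap T' = \emptyset$, contradicting the assumption that $\mathcal{F}$ is intersecting. Hence some $j$ with $s_j \le k+j-1$ must exist, which is the desired $i = k-j$.

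The main (modest) obstacle is simply getting the indexing right: the statement bundles two inequalities (an index bound and a threshold bound) together, and one needs to translate $|S \cap [2k-i-1]| \ge k-i$ into the cleaner statement $s_{k-i} \le 2k-i-1$ before the pigeonhole-style argument becomes transparent. Once this reformulation is made, the proof is a one-line application of the preceding Claim to the two canonical disjoint $k$-sets inside $[2k]$.
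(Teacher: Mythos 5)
Your proof is correct and follows essentially the same strategy as the paper's: negate the conclusion, invoke the downward-closure Claim for $\preceq$, and produce two disjoint members of $\mathcal{F}$. The paper takes $T$ to be the first $k$ integers not contained in $S$ (which, under your reformulation $s_j \ge k+j$, is exactly $[k]$) and contradicts with $S$ itself, whereas you use the two halves of $[2k]$; your explicit translation of $|S\cap[2k-i-1]|\ge k-i$ into $s_{k-i}\le 2k-i-1$ just makes the index bookkeeping cleaner.
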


\begin{proof}
Assume that $|S\cap[2k-i-1]|<k-i$ for all $i\in[0,k-1]$. Define
$T$ to be the set of the first $k$ integers not contained in $S$.
We claim that $T\preceq S$. Indeed, let the elements of $T$ be $t_{1}<t_{2}<\dots<t_{k}$.
If $t_{j}>s_{j}$ for some $j\in[1,k]$, then $|S\cap[t_{j}-1]|\ge j$. Furthermore, $t_j \le k+j$ since any integer less than $t_j$ is either of the form $t_i,i<j$ or an element of $S$. 
Thus, $|S\cap[2k-(k-j)-1]|\ge|S\cap[t_{j}-1]|\ge j$,
contradicting our assumption on $S$. Hence, there exists a set $T\preceq S$
with $T$ disjoint from $S$. Since ${\cal F}$ is shifted, $T\in{\cal F}$,
contradicting our assumption that ${\cal F}$ is intersecting. 
\end{proof}
We next introduce our key partition of the set system ${\cal F}$.
For each $S\in{\cal F}$, by Lemma \ref{lem:intersecting-interval},
there exists $i\in[0,k-1]$ such that $|S\cap[2k-i-1]|\ge k-i$. Let
$i_{S}$ be the smallest such $i$. We then define the sub-collection
of sets ${\cal F}_{i}$ to be the sets $S$ for which $i_{S}=i$.
We say that $S\in{\cal F}_{i}$ has \textit{type $i$}. 

\begin{lem}
\label{lem:partition}We have ${\cal F}=\bigcup_{i=0}^{k-1}{\cal F}_{i}$,
and for any $S\in{\cal F}_{i}$, $|S\cap[2k-i-1]|=k-i$ and $2k-i\notin S$. 
\end{lem}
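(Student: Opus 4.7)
The plan is to observe that both statements are essentially bookkeeping about the minimality of $i_S$, and the only care needed is in the edge case $i=0$.

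First, the fact that $\cal F = \bigcup_{i=0}^{k-1} \cal F_i$ is immediate: Lemma \ref{lem:intersecting-interval} guarantees that for each $S \in \cal F$ there is some $i \in [0,k-1]$ with $|S \cap [2k-i-1]| \ge k-i$, so $i_S$ is well-defined, and the $\cal F_i$ are disjoint by construction.

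For the equality $|S \cap [2k-i-1]| = k-i$, I would argue by contradiction. Suppose $|S \cap [2k-i-1]| \ge k-i+1$. If $i \ge 1$, then since $[2k-i-1] \subseteq [2k-(i-1)-1]$, we get
\[
|S \cap [2k-(i-1)-1]| \ge |S\cap [2k-i-1]| \ge k-i+1 = k-(i-1),
\]
showing that $i-1$ also witnesses the condition of Lemma \ref{lem:intersecting-interval}, which contradicts the minimality of $i_S = i$. For $i=0$ the inequality $|S\cap [2k-1]| \ge k+1$ is impossible since $|S|=k$, so equality again follows.

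Finally, for $2k-i \notin S$: if $i \ge 1$ and $2k-i \in S$, then
\[
|S \cap [2k-(i-1)-1]| = |S \cap [2k-i]| = |S \cap [2k-i-1]| + 1 = k-i+1,
\]
again contradicting the minimality of $i_S$. For $i=0$, the equality $|S \cap [2k-1]| = k = |S|$ already forces $S \subseteq [2k-1]$, and in particular $2k \notin S$. The only mild obstacle is remembering to separate the $i=0$ case, where there is no $i-1$ to compare to but the size bound $|S|=k$ does the job directly.
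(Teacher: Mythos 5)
Your proof is correct and follows essentially the same route as the paper: both arguments extract the equality $|S\cap[2k-i-1]|=k-i$ and the exclusion $2k-i\notin S$ from the minimality of $i_S$ (which bounds $|S\cap[2k-i]|$ above by $k-i$), treating $i=0$ separately via $|S|=k$. The only cosmetic difference is that you phrase the two conclusions as separate contradictions while the paper combines them into a single squeeze $k-i\le|S\cap[2k-i-1]|\le|S\cap[2k-i]|\le k-i$.
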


\begin{proof}
For each $S\in{\cal F}$, let $i=i_{S}$ be the smallest integer in
$[0,k-1]$ such that $|S\cap[2k-i-1]|\ge k-i$ (so $S\in{\cal F}_{i}$).
If $i=0$, then $|S\cap[2k-1]|\ge k$ so $|S\cap[2k-1]|=k$ and $S\setminus[2k-1]=\emptyset$,
so $S\in{\cal F}_{0}$. Next, assume that $i\ge1$. Since $i$ is
smallest, $|S\cap[2k-i]|<k-i+1$ and $|S\cap[2k-i-1]|\ge k-i$. In
particular, $|S\cap[2k-i-1]|=|S\cap[2k-i]|=k-i$, which implies $2k-i\notin S$,
so $S\in{\cal F}_{i}$. 
\end{proof}
For each set $S\in{\cal F}_{i}$, define $\pi_{i}(S)=S\cap[2k-i-1]$
and $\psi_{i}(S)=S\setminus[2k-i-1]\subseteq[2k-i+1,n]$. 
\begin{lem}
\label{lem:intersecting-Fi}The collection of sets $\pi_{i}({\cal F}_{i}):=\{\pi_{i}(S),S\in{\cal F}_{i}\}$
is intersecting. 
\end{lem}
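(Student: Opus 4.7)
The plan is to argue by contradiction: suppose some $S,T\in\mathcal{F}_i$ satisfy $\pi_i(S)\cap\pi_i(T)=\emptyset$, and produce a set in $\mathcal{F}$ disjoint from $S$, contradicting that $\mathcal{F}$ is intersecting. The case $i=0$ will be trivial, since $\pi_0(S)$ and $\pi_0(T)$ are then $k$-subsets of $[2k-1]$ and pigeonhole forces them to meet, so I will focus on $i\ge 1$.

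The key count is that, because $\pi_i(S)$ and $\pi_i(T)$ are disjoint $(k-i)$-subsets of $[2k-i-1]$, the complement
\[
R \;:=\; [2k-i]\setminus\bigl(\pi_i(S)\cup\pi_i(T)\bigr)
\]
has size exactly $(2k-i)-2(k-i)=i$, and automatically contains the element $2k-i$. I will then form the candidate set $T^{*}:=\pi_i(T)\cup R$, which is a $k$-subset of $[2k-i]$.

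The main step is to check that $T^{*}\preceq T$, so that the claim preceding Lemma~\ref{lem:intersecting-interval} (shifted families are closed under $\preceq$) places $T^{*}$ in $\mathcal{F}$. To do this, I will use that $T\in\mathcal{F}_i$ has its last $i$ elements in $[2k-i+1,n]$, so if $t_1<\dots<t_k$ are the elements of $T$ then $t_{k-i+j}\ge 2k-i+j$ for $j=1,\dots,i$. Writing $u_1<\dots<u_k$ for the sorted elements of $T^{*}\subseteq[2k-i]$, for $j>k-i$ one has $u_j\le 2k-i<t_j$ immediately, and for $j\le k-i$ the element $t_j\in\pi_i(T)\subseteq T^{*}$ sits in position $\ge j$ inside $T^{*}$, giving $u_j\le t_j$. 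I expect this bookkeeping to be the main (but still short) obstacle.

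Once $T^{*}\in\mathcal{F}$ is established, I will finish by showing $T^{*}\cap S=\emptyset$: the part $\pi_i(T)\cap S$ sits in $[2k-i-1]\cap S=\pi_i(S)$, which is empty by the contradiction hypothesis; the part $R\cap S$ is disjoint from $\pi_i(S)$ by construction of $R$, disjoint from $\psi_i(S)\subseteq[2k-i+1,n]$ because $R\subseteq[2k-i]$, and the element $2k-i\in R$ is not in $S$ by Lemma~\ref{lem:partition}. Hence $T^{*}$ and $S$ are disjoint members of the intersecting family $\mathcal{F}$, which is the desired contradiction.
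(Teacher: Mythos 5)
Your proposal is correct and follows essentially the same route as the paper: both take two sets of $\mathcal{F}_i$ with disjoint $\pi_i$-parts, form the complement $R=[2k-i]\setminus(\pi_i(S)\cup\pi_i(T))$ of size $i$, replace the tail of one set by $R$ to get a member of $\mathcal{F}$ (via closure of shifted families under $\preceq$), and observe that this member is disjoint from the other set. The only cosmetic difference is that the paper verifies $R\preceq\psi_i(S')$ on the size-$i$ tails while you verify $T^*\preceq T$ on the full $k$-sets; these amount to the same shifting argument.
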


\begin{proof}
Assume that there exists $S,S'\in{\cal F}_{i}$ with $\pi_{i}(S)\cap\pi_{i}(S')=\emptyset$.
Then $|\pi_{i}(S)\cup\pi_{i}(S')|=2(k-i)$ and $|[2k-i]\setminus(\pi_{i}(S)\cup\pi_{i}(S'))|=i$.
Let $T=[2k-i]\setminus(\pi_{i}(S)\cup\pi_{i}(S'))$. It is trivial
that $T\preceq\psi_{i}(S')$, and thus, using shifts we can obtain
the set $\pi_{i}(S')\cup T$ from $S'$. In particular, $\pi_{i}(S')\cup T\in{\cal F}$,
but this contradicts the assumption ${\cal F}$ is intersecting as
$S\cap(\pi_{i}(S')\cup T)=\emptyset$. 
\end{proof}

\subsection{Proof of the Erd\H{o}s-Ko-Rado Theorem}
\begin{proof}[Proof of Theorem \ref{thm:EKR}]
We prove the theorem by induction on $k$ and on $n$. When $k=1$,
the claim is trivial for any $n\ge1$. 

Next, consider $k\ge2$. For $n=2k$, the conclusion directly follows
as subsets of $[2k]$ of size $k$ can be partitioned into $\frac{1}{2}\binom{2k}{k}=\binom{2k-1}{k-1}$
pairs of sets, such that the sets in each pair are disjoint. Thus,
each intersecting system of $k$-sets in $[2k]$ has size at most
$\binom{2k-1}{k-1}$. 

Next, consider $n>2k$. Observe that 
\[
|{\cal F}|\le\sum_{i=0}^{k-1}|{\cal F}_{i}|.
\]
For $i>0$, we have that $\pi_{i}({\cal F}_{i})$ is a $(k-i)$-uniform
intersecting set system on the universe $[2k-i-1]$, and by the inductive
hypothesis, $|\pi_{i}({\cal F}_{i})|\le\binom{2k-i-2}{k-i-1}$. Since
each set in ${\cal F}_{i}$ can be chosen by picking a set in $\pi_{i}({\cal F}_{i})$,
and picking the remaining $i$ elements in $[n]\setminus[2k-i]$ in
$\binom{n-2k+i}{i}$ ways, 
\[
|{\cal F}_{i}|\le|\pi_{i}({\cal F}_{i})|\binom{n-2k+i}{i}\le\binom{2k-i-2}{k-i-1}\binom{n-2k+i}{i}.
\]
Furthermore, consider the set system ${\cal F}'$ as follows. For
each set $S\in{\cal F}_{i}$, define a set $S'=\pi_{i}(S)\cup\{2k-i+1,\dots,2k\}$
and include it in ${\cal F}'$. Then $|{\cal F}'|=\sum_{i=0}^{k-1}|\pi_{i}({\cal F}_{i})|$.
Furthermore, ${\cal F}'$ is intersecting: since ${\cal F}$ is shifted,
$S\in{\cal F}_{i}$ implies that $S'\in{\cal F}$. Hence, ${\cal F}'$
is an intersecting system of $k$-sets of $[2k]$, which satisfies
$|{\cal F}'|\le\binom{2k-1}{k-1}$ by the base case above. Thus, $\sum_{i=0}^{k-1}|\pi_{i}({\cal F}_{i})|\le\binom{2k-1}{k-1}$.

Hence, we have 
\begin{align*}
|{\cal F}| & \le\sum_{i=1}^{k-1}|\pi_{i}({\cal F}_{i})|\binom{n-2k+i}{i}+|{\cal F}_{0}|\\
 & \le\binom{2k-1}{k-1}+\sum_{i=1}^{k-1}|\pi_{i}({\cal F}_{i})|\left(\binom{n-2k+i}{i}-1\right)\\
 & \le\binom{2k-1}{k-1}+\sum_{i=1}^{k-1}\binom{2k-i-2}{k-i-1}\left(\binom{n-2k+i}{i}-1\right)\\
 & =\sum_{i=0}^{k-1}\binom{2k-i-2}{k-i-1}\binom{n-2k+i}{i}+\binom{2k-1}{k-1}-\binom{2k-2}{k-1}-\sum_{i=1}^{k-1}\binom{2k-i-2}{k-1}\\
 & =\binom{n-1}{k-1},
\end{align*}
where we have used that 
\begin{equation}
\binom{2k-2}{k-1}+\sum_{i=1}^{k-1}\binom{2k-i-2}{k-1}=\sum_{j=0}^{k-1}\binom{k-1+j}{k-1}=\sum_{j=0}^{k-1}\left(\binom{k+j}{k}-\binom{k+j-1}{k}\right)=\binom{2k-1}{k-1},\label{eq:k-1-formula}
\end{equation}
and 
\begin{equation}
\sum_{i=0}^{k-1}\binom{2k-i-2}{k-i-1}\binom{n-2k+i}{i}=\binom{n-1}{k-1}.\label{eq:prod-formula}
\end{equation}
This latter equality can be justified as follows: for each subset
of $[n-1]$ of size $k-1$, there is a unique $i\in[0,k-1]$ so that
the set contains exactly $i$ elements in $[2k-i,n-1]$ and $k-i-1$
elements in $[2k-i-2]$ ($i$ . can be uniquely written as the union
of a subset of $[2k-i,n-1]$ of size $i$ and a subset of $[2k-i-2]$
of size $k-i-1$ (i.e., $i$ is the largest integer so that the set
contains at least $i$ elements in $[2k-i,n-1]$). 

This shows the desired inequality. Furthermore, to attain equality,
when $n>2k$, it must be the case that $|\pi_{i}({\cal F}_{i})|=\binom{2k-i-2}{k-i-1}$
for each $i\ge1$ and $|{\cal F}_{0}|=\binom{2k-2}{k-1}$. By the
inductive hypothesis, for $i\ge1$, $\pi_{i}({\cal F}_{i})$ must
only consist of sets that contain $1$ (recall that $\pi_{i}({\cal F}_{i})$
is shifted). Then ${\cal F}_{0}$ cannot contain any set without $1$
as well, since the complement of that set in $[2k]$ must be a set
of type $i\ge1$ that contains $1$. In particular, ${\cal F}$ can
only consist of sets containing $1$. 
\end{proof}

\subsection{Proof of the Hilton-Milner Theorem }
\begin{proof}[Proof of Theorem \ref{thm:HM}]
We follow a similar scheme to our proof of Theorem \ref{thm:EKR}.
Consider $n>2k$ and the same decomposition of ${\cal F}$ into the
families ${\cal F}_{i}$. We again have that 
\[
|{\cal F}|\le\sum_{i=0}^{k-1}|{\cal F}_{i}|,
\]
and as before, for $i>0$, $|\pi_{i}({\cal F}_{i})|\le\binom{2k-i-2}{k-i-1}$
and 
\[
|{\cal F}_{i}|\le|\pi_{i}({\cal F}_{i})|\binom{n-2k+i}{i}\le\binom{2k-i-2}{k-i-1}\binom{n-2k+i}{i}.
\]
We also have 
\[
\sum_{i=0}^{k-1}|\pi_{i}({\cal F}_{i})|\le\binom{2k-1}{k-1}.
\]
Since ${\cal F}$ contains at least one set that does not contain
$1$ and ${\cal F}$ is shifted, it must be the case that ${\cal F}$
contains $\{2,3,\dots,k+1\}$. Then, ${\cal F}$ cannot contain the
set $\{1,k+2,\dots,2k\}$, and in particular, ${\cal F}$ contains
no set of type $k-1$, i.e. $|{\cal F}_{k-1}|=|\pi_{k-1}({\cal F}_{k-1})|=0$. 

Hence, we have that 
\begin{align*}
|{\cal F}| & \le\sum_{i=0}^{k-1}|{\cal F}_{i}|\le\binom{2k-1}{k-1}+\sum_{i=1}^{k-2}|\pi_{i}({\cal F}_{i})|\left(\binom{n-2k+i}{i}-1\right)\\
 & \le\binom{2k-1}{k-1}+\sum_{i=1}^{k-2}\binom{2k-i-2}{k-i-1}\left(\binom{n-2k+i}{i}-1\right)\\
 & =\binom{2k-1}{k-1}+\sum_{i=1}^{k-1}\binom{2k-i-2}{k-i-1}\left(\binom{n-2k+i}{i}-1\right)-\left(\binom{n-k+1}{k-1}-1\right)\\
 & =\binom{n-1}{k-1}-\left(\binom{n-k+1}{k-1}-1\right),
\end{align*}
as desired (here we have used (\ref{eq:k-1-formula}) and (\ref{eq:prod-formula})).

Equality occurs only if $|\pi_{i}({\cal F}_{i})|=\binom{2k-i-2}{k-i-1}$
for each $i\in[1,k-2]$, and $|{\cal F}_{0}|=\binom{2k-2}{k-1}+1$.
We then have that each ${\cal F}_{i}$ with $2\le i\le k-2$ is the collection
of all sets of type $i$ which contains $1$. 

If $k\ge 4$, since one can check that the set $\{1,k+1,k+3,\dots,2k\}$ has type $k-2$, ${\cal F}_{k-2}$ contains this set. In particular, the set $\{2,3,\dots,k,k+2\}$ is not contained in $\cal{F}$. Since $\cal{F}$ is shifted, this implies that there can be no set in $\cal{F}$ not containing $1$ that is different from the set $\{2,3,\dots,k+1\}$. Hence, we conclude
that if equality occurs, then ${\cal F}$ must be the family consisting
of sets containing $1$ that intersect $\{2,3,\dots,k+1\}$, together
with the set $\{2,3,\dots,k+1\}$. 

If $k=3$, then $\pi_1(\cal{F}_1)$ is a shifted intersecting family of size $3$ consisting of subsets of size $2$ in $\{1,2,3,4\}$. One can then check that $\pi_1(\cal{F}_1)$ is either the family of sets containing the element $1$, or the family \linebreak $\{\{1,2\},\{1,3\},\{2,3\}\}$. From this, one can obtain that $\cal{F}$ is either the family of sets containing the element $1$ together with $\{2,3,4\}$, or the family of sets intersecting $\{1,2,3\}$ in a subset of size exactly $2$. 
\end{proof}
\subsection{A characterization of maximal shifted intersecting families}

Here we record a characterization of maximal shifted intersecting
families based on the partition of the set system in Section \ref{sec:Partitioning}.
Let ${\cal F}$ be a maximal intersecting family which is shifted.
Given a subset $A'$ of $[2k-i]$ of size $k-i$, we denote by ${\cal S}_{i}(A')=\{B\subseteq[n]:\pi_{i}(B)=A',B\setminus\pi_{i}(B)\subseteq[2k-i+1]\}$.
For each $A$ of type $i$, define ${\cal S}(A)={\cal S}_{i}(\pi_{i}(A))$. 
\begin{lem}
\label{lem:characterization}Let $A\in{\cal F}$ be set of type $i$.
Then ${\cal F}$ contains ${\cal S}(A)$.
\end{lem}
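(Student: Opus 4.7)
The plan is to bootstrap from a single canonical element of $\mathcal{S}(A)$ (which I can place in $\mathcal{F}$ by shifting alone) to every element of $\mathcal{S}(A)$ (via maximality). First I would write $A'=\pi_{i}(A)\subseteq[2k-i-1]$, $|A'|=k-i$, and $T=A\setminus[2k-i-1]\subseteq[2k-i+1,n]$, $|T|=i$, so $A=A'\cup T$. A brief calculation shows $A'\cup[2k-i+1,2k]\preceq A$: the first $k-i$ entries agree, and the $j$-th element $t_{j}$ of $T$ satisfies $t_{j}\ge 2k-i+j$ since $t_{1}\ge 2k-i+1$ and the $t_{j}$ are strictly increasing integers. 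The shifting claim from Section~\ref{sec:Partitioning} then gives $A'\cup[2k-i+1,2k]\in\mathcal{F}$, which is the canonical representative I will use.

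For the main step, I would take an arbitrary $T'\subseteq[2k-i+1,n]$ with $|T'|=i$, set $B=A'\cup T'$, and aim to show $B\in\mathcal{F}$. Since $\mathcal{F}$ is maximal intersecting, it is enough to show that $B$ meets every $C\in\mathcal{F}$. Suppose to the contrary that some $C\in\mathcal{F}$ is disjoint from $B$, so $C$ is a $k$-subset of $[n]\setminus(A'\cup T')$. Let $C^{*}$ denote the set of the $k$ smallest elements of $[n]\setminus(A'\cup T')$; then $C^{*}\preceq C$ coordinate-wise, and the shifting claim gives $C^{*}\in\mathcal{F}$.

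The crux will be identifying $C^{*}$ explicitly. Since $A'\subseteq[2k-i-1]$ has size $k-i$, the set $[2k-i-1]\setminus A'$ has size exactly $k-1$; moreover $2k-i\notin A'$ (too large) and $2k-i\notin T'$ (too small), so $2k-i$ is the $k$-th smallest element of $[n]\setminus(A'\cup T')$. Therefore $C^{*}=[2k-i]\setminus A'\subseteq[2k-i]$, which is disjoint from $A'\cup[2k-i+1,2k]$. Since both sets lie in $\mathcal{F}$, this contradicts that $\mathcal{F}$ is intersecting, and we conclude $B\in\mathcal{F}$, finishing the proof.

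I do not foresee a real obstacle here; the only point to recognize is that the gap element $2k-i$ (absent from every type-$i$ set by Lemma~\ref{lem:partition}) is precisely what supplies the $k$-th entry needed to keep the witness $C^{*}$ inside $[2k-i]$ and thereby disjoint from the canonical representative $A'\cup[2k-i+1,2k]$.
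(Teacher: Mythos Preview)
Your proof is correct and follows essentially the same approach as the paper's: both argue by maximality, assume some $C\in\mathcal{F}$ is disjoint from $B$, and shift $C$ down to $C^{*}=[2k-i]\setminus A'\in\mathcal{F}$ to contradict that $\mathcal{F}$ is intersecting. The only difference is that you derive the contradiction against the auxiliary representative $A'\cup[2k-i+1,2k]$, whereas the paper simply notes that $A$ itself is already disjoint from $[2k-i]\setminus A'$ (since $A'\cap([2k-i]\setminus A')=\emptyset$ and $\psi_i(A)\subseteq[2k-i+1,n]$), so your first paragraph is unnecessary.
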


\begin{proof}
We show that any set $B\in{\cal S}(A)$ intersects all sets in ${\cal F}$.
Indeed, assume that $C\in{\cal F}$ is disjoint from $B$. Then $C\cap\pi_{i}(A)=\emptyset$.
Since ${\cal F}$ is shifted, by shifting $C$, we obtain that $[2k-i]\setminus\pi_{i}(A)\in{\cal F}$
(note that $|[2k-i]\setminus\pi_{i}(A)|=k$), which is a contradiction
as $A\cap([2k-i]\setminus\pi_{i}(A))=\emptyset$. 

Since ${\cal F}$ is maximal, we then have that ${\cal S}(A)\subseteq{\cal F}$. 
\end{proof}
\begin{corollary}
There is a bijection between maximal shifted intersecting families
on $[n]$ and shifted intersecting set systems ${\cal G}=\bigcup_{i=0}^{k-1}{\cal G}_{i}$
over $[2k-1]$ where ${\cal G}_{i}$ consists of sets of size $k-i$
contained in $[2k-i-1]$. 
\end{corollary}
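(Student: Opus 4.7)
The plan is to construct explicit mutual inverse maps $\Phi$ and $\Psi$ between maximal shifted intersecting families on $[n]$ and the described families on $[2k-1]$, using the canonical partition of Section~\ref{sec:Partitioning}.

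Forward map: for a maximal shifted intersecting $\mathcal{F}$ on $[n]$, I define $\mathcal{G}_{i} := \pi_{i}(\mathcal{F}_{i})$ via the partition of Lemma~\ref{lem:partition}, and put $\mathcal{G} := \bigcup_{i=0}^{k-1} \mathcal{G}_{i}$. The containment $\mathcal{G}_{i} \subseteq \binom{[2k-i-1]}{k-i}$ is immediate from the definition of $\pi_{i}$. Each $\mathcal{G}_{i}$ inherits the shifted property because an $(\ell,m)$-shift with $\ell,m \in [2k-i-1]$ does not alter the type. Intersection within a single $\mathcal{G}_{i}$ is Lemma~\ref{lem:intersecting-Fi}; for $S \in \mathcal{G}_{i}$ and $S' \in \mathcal{G}_{j}$ with $i \le j$, I pass to the canonical representatives $\widetilde{A} := S \cup \{2k-i+1,\dots,2k\}$ and $\widetilde{A}' := S' \cup \{2k-j+1,\dots,2k\}$, both of which lie in $\mathcal{F}$ by iterated shifts of any preimages, and then extract $S \cap S' \ne \emptyset$ from $\mathcal{F}$ being intersecting.

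Backward map: given $\mathcal{G}$ with the stated structure, set
\[
\mathcal{F}_{i} := \bigl\{S \cup T : S \in \mathcal{G}_{i},\ T \in \binom{[2k-i+1,n]}{i}\bigr\},
\qquad \mathcal{F} := \bigcup_{i=0}^{k-1} \mathcal{F}_{i}.
\]
A direct computation shows each element of $\mathcal{F}_{i}$ has type exactly $i$, so the $\mathcal{F}_{i}$'s are disjoint and match the canonical partition of $\mathcal{F}$. The family $\mathcal{F}$ is intersecting because any two of its sets contain elements of $\mathcal{G}$, which intersect by hypothesis. Shiftedness follows by case analysis on an $(\ell,m)$-shift $A \to A - \{m\} + \{\ell\}$ of $A = S \cup T \in \mathcal{F}_{i}$: when $\ell,m$ both lie in $[2k-i-1]$ or both in $[2k-i+1,n]$, the result stays in $\mathcal{F}_{i}$ by the shiftedness of $\mathcal{G}_{i}$; when $\ell \in [2k-i-1] \setminus S$ and $m \in T$, the result has some type $j < i$ and lies in $\mathcal{F}_{j}$ precisely when the corresponding projection lies in $\mathcal{G}_{j}$, which is ensured by iterating the cross-level extension property that $S \cup \{x\} \in \mathcal{G}_{i-1}$ for each $x \in [2k-i] \setminus S$ (itself implicit in $\mathcal{G}$'s shifted structure on $[2k-1]$). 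Maximality of $\mathcal{F}$ is exactly Lemma~\ref{lem:characterization}.

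The bijection check is then short: $\Phi \circ \Psi = \mathrm{id}$ is immediate from $\pi_{i}(S \cup T) = S$ for $S \cup T \in \Psi(\mathcal{G})_{i}$, and $\Psi \circ \Phi = \mathrm{id}$ is a restatement of Lemma~\ref{lem:characterization}, which identifies each $\mathcal{F}_{i}$ with $\{S \cup T : S \in \pi_{i}(\mathcal{F}_{i}),\ T \in \binom{[2k-i+1,n]}{i}\}$. The main obstacle I expect is the case analysis for shiftedness of $\Psi(\mathcal{G})$: tracking precisely how the type of $A - \{m\} + \{\ell\}$ changes when a single element of $T \subseteq [2k-i+1,n]$ is swapped with some $\ell \in [2k-i-1] \setminus S$, and identifying the correct target in $\mathcal{G}_{j}$ via iterated one-element extensions, is conceptually straightforward but notationally intricate.
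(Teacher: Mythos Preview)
Your overall plan---defining $\Phi(\mathcal{F})$ by $\mathcal{G}_i=\pi_i(\mathcal{F}_i)$ and $\Psi(\mathcal{G})$ by blowing each $S\in\mathcal{G}_i$ up to $\mathcal{S}_i(S)$---is exactly the paper's approach (the paper's own proof is only a sketch, ending with ``one can easily check''). However, two of your verification steps fail as written.

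First, the cross-level intersection argument is wrong. For $1\le i<j$ the tails $\{2k-i+1,\dots,2k\}$ and $\{2k-j+1,\dots,2k\}$ already overlap, so $\widetilde A\cap\widetilde A'\supseteq\{2k-i+1,\dots,2k\}\ne\emptyset$ irrespective of whether $S\cap S'$ is empty; you cannot ``extract $S\cap S'\ne\emptyset$'' from this. A correct argument imitates the proof of Lemma~\ref{lem:intersecting-Fi}: assuming $S\cap S'=\emptyset$ with $i\le j$, set $C:=[2k-i]\setminus S$ (a $k$-set), check that $C\preceq A'$ for any preimage $A'\in\mathcal{F}_j$ of $S'$ (using $S'\subseteq C$ for the first $k-j$ coordinates and a counting bound for the last $j$), conclude $C\in\mathcal{F}$ by shiftedness, and then note $C$ is disjoint from every $A\in\mathcal{F}_i$ with $\pi_i(A)=S$.

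Second, the ``cross-level extension property'' you rely on---that $S\in\mathcal{G}_i$ forces $S\cup\{x\}\in\mathcal{G}_{i-1}$ for each $x\in[2k-i]\setminus S$---is \emph{not} a consequence of $\mathcal{G}$ being shifted: shifts preserve cardinality and can never produce the larger set $S\cup\{x\}$. Concretely, for $k=2$ the system $\mathcal{G}_0=\emptyset$, $\mathcal{G}_1=\{\{1\}\}$ is shifted and intersecting with the prescribed size constraints, yet $\Psi(\mathcal{G})=\{\{1,t\}:4\le t\le n\}$ is neither shifted (it omits $\{1,2\}$) nor maximal. Thus the target class in the corollary must carry this extension property as an additional hypothesis (the paper is loose here too), and your appeal to Lemma~\ref{lem:characterization} for maximality of $\Psi(\mathcal{G})$ is circular, since that lemma presupposes a maximal $\mathcal{F}$ rather than producing one.
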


\begin{proof}
To each maximal shifted intersecting family ${\cal F}$ on $[n]$,
we can associate a shifted intersecting set system ${\cal G}=\bigcup_{i=0}^{k-1}{\cal G}_{i}$
over $[2k-1]$ where ${\cal G}_{i}$ consists of sets of size $k-i$
contained in $[2k-i-1]$, by defining ${\cal G}_{i}$ to be the collection
of $\pi_{i}(A)$ for $A\in{\cal F}$ of type $i$. ${\cal G}$ is
an intersecting set system by Lemma \ref{lem:intersecting-Fi}. Conversely,
for each shifted intersecting set system ${\cal G}=\bigcup_{i=0}^{k-1}{\cal G}_{i}$
over $[2k-1]$, by Lemma \ref{lem:characterization}, we can recover
the maximal shifted intersecting set system ${\cal F}$ on $[n]$
as the union of ${\cal S}_{i}(A_{i})$ for $A_{i}\in{\cal G}$ of
size $i$. One can easily check that this gives a bijection between
maximal shifted intersecting families on $[n]$ and shifted intersecting
set systems ${\cal G}=\bigcup_{i=0}^{k-1}{\cal G}_{i}$ over $[2k-1]$. 
\end{proof}


\begin{thebibliography}{1}
\bibitem{EKR}P. Erd\H{o}s, C. Ko, and R. Rado, ``Intersection theorems
for systems of finite sets'', Quart. J. Math. Oxford Ser. (2) 12 (1961),
313--320. 

\bibitem{Fr87}P. Frankl, ``The shifting technique in extremal set
theory'', pp. 81--110 in Surveys in combinatorics 1987 (New Cross,
1987), edited by C. Whitehead, London Math. Soc. Lecture Note Ser.
123, Cambridge Univ. Press, 1987.

\bibitem{Fr19}P. Frankl, ``A simple proof of the Hilton--Milner
Theorem'', Moscow J. Combinatorics and Number Theory 8 (2019), 97--101.

\bibitem{FF}P. Frankl and Z. F\"uredi, ``Nontrivial intersecting families'',
J. Combin. Theory Ser. A 41:1 (1986), 150--153.

\bibitem{FT}P. Frankl and N. Tokushige, ``Some best possible inequalities
concerning cross-intersecting families'', J. Combin. Theory Ser. A
61:1 (1992), 87-{}-97.

\bibitem{HM}A. J. W. Hilton and E. C. Milner, ``Some intersection
theorems for systems of finite sets'', Quart. J. Math. Oxford Ser.
(2) 18 (1967), 369--384.

\bibitem{KZ}A. Kupavskii and D. Zakharov, ``Regular bipartite graphs
and intersecting families'', J. Combin. Theory Ser. A 155 (2018),
180--189.

\bibitem{M}M. M\"ors, ``A generalization of a theorem of Kruskal'',
Graphs Combin. 1:2 (1985), 167--183.

\end{thebibliography}
\end{document}